\newcommand{\RR}{\mathbb{R}}
\numberwithin{equation}{section}
\theoremstyle{plain}
\newtheorem{proposition}{Proposition}[section]
\newtheorem{theorem}[proposition]{Theorem}
\newtheorem{lemma}[proposition]{Lemma}
\theoremstyle{definition}
\newtheorem{definition}[proposition]{Definition}
\theoremstyle{remark}
\newtheorem{remark}[proposition]{Remark}
\begin{document}

\title[Formality of $7$-dimensional $3$-Sasakian manifolds]{Formality of $7$-dimensional $3$-Sasakian manifolds}

\author[M. Fern\'andez]{Marisa Fern\'{a}ndez}
\address{Universidad del Pa\'{\i}s Vasco,
Facultad de Ciencia y Tecnolog\'{\i}a, Departamento de Mate\-m\'a\-ticas,
Apartado 644, 48080 Bilbao, Spain}
\email{marisa.fernandez@ehu.es}

\author[S. Ivanov]{Stefan Ivanov}
\address{University of Sofia "St. Kl. Ohridski",
Faculty of Mathematics and Informatics\\
Blvd. James Bourchier 5\\
1164 Sofia, Bulgaria}
\address{Institute of Mathematics and Informatics, Bulgarian Academy of
Sciences}
\email{ivanovsp@fmi.uni-sofia.bg}

\author[V. Mu\~{n}oz]{Vicente Mu\~{n}oz}
\address{Facultad de Ciencias Matem\'aticas, Universidad
Complutense de Madrid, Plaza de Ciencias
3, 28040 Madrid, Spain}
\address{Instituto de Ciencias Matem\'aticas (CSIC-UAM-UC3M-UCM),
C/ Nicolas Cabrera 15, 28049 Madrid, Spain}
\email{vicente.munoz@mat.ucm.es}

\subjclass[2010]{53C25, 55S30, 55P62}

\keywords{$3$-Sasakian manifolds, Sasaki-Einstein manifolds, formality, Massey products.}

\begin{abstract}
We prove that any simply connected compact  $3$-Sasakian manifold, of dimension seven,
is formal if and only if its second Betti number is $b_2<2$. In the opposite, we show
an example of a $7$-dimensional Sasaki-Einstein manifold,
with second Betti number $b_{2}\geq 2$, which is formal.  Therefore, such an example does not admit
any  $3$-Sasakian structure. Examples of $7$-dimensional simply connected compact formal Sasakian manifolds,
with $b_{2}\geq 2$, are also given.
\end{abstract}

\maketitle

\section{Introduction}\label{sec:intro}

A Riemannian manifold $(N, g)$, of dimension $2n+1$, is Sasakian if its metric cone
$(N\times{\mathbb{R}}^+, 
g^c =  t^2 g+dt^2)$ is K\"ahler. If in addition the metric $g$ is Einstein, then $(N, g)$
is said to be a {\em Sasaki-Einstein} manifold. In this case, the cone metric $g^c$ is Ricci flat.
Sasakian geometry is the odd-dimensional counterpart to K\"ahler geometry. Indeed, just as K\"ahler geometry
 is the intersection of complex, symplectic and Riemannian geometry, Sasakian geometry is
 the intersection of normal, contact and Riemannian geometry.

Sasakian structures can be also defined in terms of strongly pseudo
convex CR-structures, namely a strongly pseudo convex CR-structure
is Sasakian exactly when the Webster torsion vanishes (see e.g.\
\cite{DT}).

One of the results of Deligne, Griffiths, Morgan and Sullivan states that any compact K\"ahler manifold is formal  \cite{DGMS}.
However, the first and third authors in \cite{BFMT} have proved that the formality is not an obstruction
to the existence of Sasakian structures even on  simply connected manifolds. Indeed,
 examples of $7$-dimensional simply connected compact non-formal
Sasakian manifolds, with second Betti number $b_2(N)\,\geq 2$, are constructed in \cite{BFMT}.
(Note that any $7$-dimensional simply connected compact manifold  with $b_2\,\leq 1$ is formal,
see section \ref{min-models} for details.) Nevertheless, in  \cite{BFMT} it is also proved that all higher Massey products are trivial
on any compact Sasakian manifold.

We remind that a $3$-Sasakian manifold is a Riemannian manifold
$(N, g)$, of dimension $4n+3$, such that  its cone
$(N\times{\mathbb{R}}^+, g^c =  t^2 g\,+\,dt^2)$ is hyperk\"ahler,
and so the holonomy group of $g^c$ is a subgroup of
$\mathrm{Sp}(n+1)$. Thus $3$-Sasakian manifolds are automatically
Sasaki-Einstein with positive scalar curvature
\cite{Kas}. Consequently, a complete $3$-Sasakian manifold is
compact with finite fundamental group due to the Myers' theorem.
The hyperk\"ahler structure on the cone induces a $3$-Sasakian
structure on the base of the cone. In particular, the triple of
complex structures gives rise to a triple of Reeb vector fields
$(\xi_1,\xi_2, \xi_3)$
 whose Lie brackets give a copy of the Lie algebra $\mathfrak{su}(2)$. 
 
A $3$-Sasakian manifold $(N, g)$ is said to be {\em regular} if the vector fields $\xi_i$ $(i=1, 2, 3)$ are complete and 
the corresponding $3$-dimensional foliation is regular, so that the space of leaves is a smooth 
$4n$-dimensional manifold $M$. 
Ishihara and Konishi in \cite{Ishihara-Konishi} noticed that the induced metric on the 
latter is quaternionic K\"ahler  with positive scalar curvature. 
Conversely, starting with a quaternionic K\"ahler manifold $M$ of positive scalar curvature, 
the manifold $M$ can be recovered as the total space of a bundles naturally associated to $M$.

The above situation has been generalized to the orbifold category by Boyer, Galicki and Mann 
in \cite{BGMa} (see also \cite{BG-2}). In fact, if the $3$-Sasakian manifold is compact, then the
 Reeb vector fields $\xi_i$ are complete, the corresponding $3$-dimensional foliation has compact leaves and the space of leaves is
 a compact quaternionic
K\"ahler  manifold or orbifold. We recall that a
$4n$-dimensional $(n>1)$ Riemannian manifold/orbifold is
quaternionic K\"ahler if it has holonomy group contained in
$\mathrm{Sp}(n)\mathrm{Sp}(1)$, and a 4-dimensional quaternionic K\"ahler
manifold/orbifold is a self-dual Einstein  Riemannian
manifold/orbifold.

The $3$-Sasakian structures can be considered also from a
sub-Riemannian point of view \cite{Rizzi} by using quaternionic
contact structures \cite{Biq}. A $3$-Sasakian manifold is precisely
a quaternionic contact manifold with vanishing Biquard torsion and
positive (quaternionic) scalar curvature \cite{IMV}.

Important results on the topology of a compact $3$-Sasakian manifold
were proved by Galicki and Salamon in \cite{GS}. There it is proved that the odd Betti numbers
$b_{2i+1}$ of such a manifold, of dimension $4n+3$, are all zero for $0\leq i \leq n$.
Moreover, for regular compact $3$-Sasakian manifolds many topological properties are known
(see  \cite[Proposition 13.5.6 and Theorem 13.5.7]{BG}). For example, such a manifold is simply connected unless $N\,=\,{\mathbb{RP}}^{4n+3}$.
Also, using the results of Lebrun and Salamon \cite{LeBrun-Salamon} about the topology of positive quaternionic K\"ahler manifolds,
Boyer and Galicki in \cite{BG} show interesting relations among the Betti numbers of regular compact $3$-Sasakian manifolds;
in particular $b_{2}\,\leq\,1$.
Nevertheless, in \cite{BGMR} it is proved that there exist many $3$-Sasakian manifolds, of dimension 7,
with arbitrary second Betti number.
The goal of this note is to prove the following.

\begin{theorem}\label{th:main}
Let $(N, g)$ be a simply connected compact $3$-Sasakian manifold, of dimension $7$. Then, $N$ is formal
if and only if its second Betti number $b_2(N)\leq 1$.
\end{theorem}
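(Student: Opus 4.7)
The theorem splits into two implications. The ``if'' direction---every simply connected compact $7$-manifold with $b_{2}\le 1$ is formal---is a purely topological fact about minimal models, independent of the $3$-Sasakian hypothesis, and will be settled in Section~\ref{min-models}. The substance of the proof is therefore the ``only if'' direction: assuming $b_{2}(N)\ge 2$, show that $N$ is not formal. My plan is to exhibit a non-zero triple Massey product $\langle\alpha,\beta,\gamma\rangle$ with $\alpha,\beta,\gamma\in H^{2}(N)$. Two facts from the introduction drastically narrow what one must check: first, a simply connected compact $7$-manifold is formal if and only if it is $3$-formal, so triple Massey products are the only cohomological obstruction; and second, by the BFMT result every Massey product of order $\ge 4$ already vanishes on any compact Sasakian manifold.

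Moreover, the Galicki--Salamon theorem gives $b_{1}(N)=b_{3}(N)=0$, and Poincar\'e duality in dimension $7$ forces $b_{4}(N)=0$. Consequently, any cup product of two degree-$2$ classes vanishes automatically in $H^{4}(N)=0$, and the indeterminacy of any triple Massey product of degree-$2$ classes---contained in $\alpha\cdot H^{3}(N)+H^{3}(N)\cdot\gamma$---is also zero, so $\langle\alpha,\beta,\gamma\rangle$ is unambiguously defined in $H^{5}(N)$. The candidates for $\alpha,\beta,\gamma$ come from the $3$-dimensional foliation $\mathcal{F}$ generated by the Reeb fields $\xi_{1},\xi_{2},\xi_{3}$, whose space of leaves is a compact self-dual Einstein $4$-orbifold $M$ of positive scalar curvature. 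The basic cohomology $H_{B}^{*}(\mathcal{F})$ equals $H^{*}(M;\mathbb{R})$, and the Leray--Serre spectral sequence of the orbifold bundle $SU(2)\to N\to M$ identifies $H^{2}(N)\cong H^{2}(M)$. The hypothesis $b_{2}(N)\ge 2$ thus provides two linearly independent basic $2$-classes $[\alpha],[\beta]\in H^{2}(N)$ whose product $[\alpha\wedge\beta]_{M}\in H^{4}(M;\mathbb{R})\cong\mathbb{R}$ is non-zero for suitable choices, by non-degeneracy of the intersection form on the compact $4$-orbifold $M$.

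On $N$ the same product must be cohomologous to zero (since $H^{4}(N)=0$), so there exists a $3$-form $\tau$ with $d\tau=\alpha\wedge\beta$; using the $3$-Sasakian structural relations $d\eta_{i}=2\omega_{i}-2\epsilon_{ijk}\,\eta_{j}\wedge\eta_{k}$ and the companion formulae for $d\omega_{i}$, $\tau$ can be built explicitly as a combination of terms $\eta_{i}\wedge\sigma_{i}$ with $\sigma_{i}$ basic $2$-forms. The main obstacle is then to verify that the Massey representative $\tau\wedge\gamma-\alpha\wedge\tau'$ is cohomologically non-trivial in $H^{5}(N)$. Because $H^{3}(N)=0$, the form $\tau$ is unique up to exact $3$-forms and the Massey class is well-defined; to detect non-vanishing I would pair it with a fourth degree-$2$ class $[\delta]$ via Poincar\'e duality and integrate over $N$. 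After integration along the fibres of $SU(2)\to N\to M$ (noting that $\int_{\text{fibre}}\eta_{1}\wedge\eta_{2}\wedge\eta_{3}>0$), the integral reduces to a combination of intersection numbers such as $[\alpha\wedge\beta]_{M}\cdot[\gamma\wedge\delta]_{M}$ on $M$. Non-degeneracy of the intersection form on $H^{2}(M)$, combined with $b_{2}(M)\ge 2$, then permits a choice of $[\alpha],[\beta],[\gamma],[\delta]$ making the resulting combination non-zero, yielding a non-trivial triple Massey product and the claimed non-formality. The delicate step is precisely this final calculation: isolating the Massey contribution from the spurious cup products that arise during the fibre integration and using the $3$-Sasakian structural equations to trade them for products of intersection numbers on $M$.
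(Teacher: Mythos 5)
Your high-level strategy is the same as the paper's: settle $b_2\le 1$ by the purely topological lemma, and for $b_2\ge 2$ produce a non-trivial triple Massey product of degree-$2$ classes coming from the $4$-dimensional leaf space $\mathcal{O}$, with non-degeneracy of its intersection form as the ultimate source of non-vanishing (your observation that $H^3(N)=H^4(N)=0$ kills indeterminacy is also how the paper's choice works). But the implementation differs, and it is in your implementation that the gap lies. The paper never touches differential forms on $N$: it shows $\mathcal{O}$ is formal (simply connected compact $4$-orbifold, using the orbifold version of the $(n-1)$-formality criterion), invokes Roig--Saralegi to get the explicit model $(H^*(\mathcal{O})\otimes\bigwedge(z),\ dz=\Omega)$ with $|z|=3$, and then, for an orthogonal basis $a_1,\dots,a_k$ of $H^2(\mathcal{O})$, reads off $\langle a_1,a_1,a_i\rangle=\pm a_i\,z\neq 0$ in $H^5$. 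That one model simultaneously gives $H^2(N)\cong H^2(\mathcal{O})$, the exactness of $a_1^2$, and the non-vanishing, so there is no computation left to do; your route instead needs the orbifold Leray--Serre identification as a separate input and still has to prove the non-vanishing by hand.

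That final step is exactly what your proposal does not carry out, and the sketch you give of it would fail as written. You propose a primitive $\tau=\sum_i\eta_i\wedge\sigma_i$ with $\sigma_i$ basic; but then each term of $(\tau\wedge\gamma-\alpha\wedge\tau')\wedge\delta$ contains a product of basic forms of total degree $6$, while the horizontal distribution has rank $4$, so the integrand vanishes pointwise and your detection pairing is identically zero --- the factor $\int_{\mathrm{fibre}}\eta_1\wedge\eta_2\wedge\eta_3$ you invoke can never arise from such a $\tau$. Indeed, since $[\alpha\wedge\beta]$ is a non-zero element of $H^4_B\cong H^4(\mathcal{O})\cong\mathbb{R}$, no primitive of vertical degree $\le 1$ exists at all: writing $d\eta_i=2\omega_i-2\eta_j\wedge\eta_k$, the vertical-degree-$2$ terms force any invariant primitive to carry a component proportional to $\eta_1\wedge\eta_2\wedge\eta_3$ (a suitable combination of $\sum_i\eta_i\wedge\omega_i$ and $\eta_1\wedge\eta_2\wedge\eta_3$ bounds the pulled-back quaternionic $4$-form), and it is precisely that component, paired against the horizontal $4$-form $\gamma\wedge\delta$, which produces the intersection-number expression $Q(\beta,\gamma)Q(\alpha,\delta)-Q(\alpha,\beta)Q(\gamma,\delta)$ that you would then make non-zero by choosing, say, an orthogonal basis with $\alpha=\beta=a_1$, $\gamma=\delta=a_2$. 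So the ``delicate step'' you defer is not bookkeeping of spurious cup products: it is the entire content of the non-formality direction, and with the stated ansatz for $\tau$ it would detect nothing. The plan is repairable along the lines above, but as it stands the proof is incomplete precisely where the theorem lives.
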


On the other hand, we prove that formality allows one to
distinguish $7$-dimensional Sasaki-Einstein manifolds  which admit $3$-Sasakian
 structures from those which do not. In fact, we show an example
 of a $7$-dimensional regular simply connected Sasaki-Einstein manifold,
with second Betti number $b_{2}\geq 2$, which is formal.
Thus, Theorem \ref{th:main} implies that such a manifold does not admit any
$3$-Sasakian structure. Our example is the total
space of an $S^1$-bundle over a positive K\"ahler Einstein
6-manifold which is the blow up of the complex projective space
${\mathbb{CP}}{}^3$ at four points.

\section{Minimal models and formal manifolds}\label{min-models}

In this section, we recall concepts about minimal models and formality from
\cite{DGMS, FHT, FM}.

Let $(\mathcal{A}, d_{\mathcal{A}})$ be a {\it differential graded commutative algebra} over the real numbers $\mathbb{R}$ (in the sequel,
we shall say just a differential algebra), that is, $\mathcal{A}$ is a
graded commutative algebra over $\mathbb{R}$ equipped with
 a differential $d_{\mathcal{A}}$ which is a derivation, i.e.
$d_{\mathcal{A}}(a\cdot b) = (d_{\mathcal{A}}a)\cdot b +(-1)^{|a|} a\cdot (d_{\mathcal{A}}b)$, where
$|a|$ is the degree of $a$. Given a differential algebra $(\mathcal{A},\,d_{\mathcal{A}})$, we
denote its cohomology by $H^*({\mathcal{A}})$. The cohomology of a
differential graded algebra $H^*({\mathcal{A}})$ is naturally a DGA with the
product inherited from that on ${\mathcal{A}}$ and with the differential
being identically zero. The DGA $({\mathcal{A}},\,d_{\mathcal{A}})$ is {\it connected} if
$H^0({\mathcal{A}})\,=\,\mathbb{R}$, and ${\mathcal{A}}$ is {\em $1$-connected\/} if, in
addition, $H^1({\mathcal{A}})\,=\,0$. Henceforth we shall assume that all our DGAs are connected.
In our context, the main example of DGA is the de Rham complex $(\Omega^*(M),\,d)$
of a connected differentiable manifold $M$, where $d$ is the exterior derivative of $M$.

Morphisms between differential algebras are required to be degree
preserving algebra maps which commute with the differentials.
A morphism $f:(\mathcal{A}, \,d_{\mathcal{A}})\to (\mathcal{B},\,d_{\mathcal{B}})$ is a {\it quasi-isomorphism} if
the map induced in cohomology $f^*:H^*(\mathcal{A})\to H^*(\mathcal{B})$ is an isomorphism.

A differential algebra $(\mathcal{A}, d_{\mathcal{A}})$ is said to be {\it minimal\/} if:
\begin{enumerate}
 \item $\mathcal{A}$ is free as an algebra, that is, $\mathcal{A}$ is the free
 algebra $\bigwedge V$ over a graded vector space $V\,=\,\bigoplus V^i$, and
 \item there exists a collection of generators $\{ a_\tau,
 \tau\in I\}$, for some well ordered index set $I$, such that
 $|a_\mu|\,\leq\, |a_\tau|$ if $\mu\, < \,\tau$ and each
 $d_{\mathcal{A}} a_\tau$ is expressed in terms of preceding $a_\mu$ ($\mu\,<\,\tau$).
 This implies that $d_{\mathcal{A}}a_\tau$ does not have a linear part.
\end{enumerate}

We shall say that $(\mathcal {M},\,d_{\mathcal {M}})$ is a {\it minimal model} of the
differential algebra $(\mathcal{A}, d_{\mathcal{A}})$ if $(\mathcal {M},\,d_{\mathcal {M}})$ is a minimal DGA and there
exists a morphism of differential graded algebras
$$\rho\colon{(\mathcal {M},\,d_{\mathcal {M}})}\longrightarrow {(\mathcal{A}, d_{\mathcal{A}})}$$
 inducing an isomorphism
$\rho^*\colon H^*(\mathcal {M})\longrightarrow H^*(\mathcal {A})$ in cohomology.
In~\cite{Halperin}, Halperin proved that any connected differential algebra
$(\mathcal{A}, d_{\mathcal{A}})$ has a minimal model unique up to isomorphism. For $1$-connected
differential algebras, a similar result was proved by Deligne, Griffiths,
Morgan and Sullivan~\cite{DGMS,GM}.

A {\it minimal model\/} of a connected differentiable manifold $M$
is a minimal model $(\bigwedge V,\,d)$ for the de Rham complex
$(\Omega^*(M),\,d)$ of differential forms on $M$. If $M$ is a simply
connected manifold, then the dual of the real homotopy vector
space $\pi_i(M)\otimes \RR$ is isomorphic to $V^i$ for any $i$.
This relation also holds when $i\,>\,1$ and $M$ is nilpotent, that
is, the fundamental group $\pi_1(M)$ is nilpotent and its action
on $\pi_j(M)$ is nilpotent for all $j\,>\,1$ (see~\cite{DGMS}).

We say that a differential algebra $(\mathcal{A}, d_{\mathcal{A}})$
is a {\em model} of a differentiable manifold $M$ if $(\mathcal{A}, d_{\mathcal{A}})$
and $M$ have the same minimal model.

Recall that a minimal algebra $(\bigwedge V,\,d)$ is called
{\it formal} if there exists a
morphism of differential algebras $\psi\colon {(\bigwedge V,\,d)}\,\longrightarrow\,
(H^*(\bigwedge V),0)$ inducing the identity map on cohomology.
Also a differentiable manifold $M$ is called {\it formal\/} if its minimal model is
formal. Many examples of formal manifolds are known: spheres, projective
spaces, compact Lie groups, homogeneous spaces, flag manifolds,
and all compact K\"ahler manifolds.

The formality of a minimal algebra is characterized as follows.

\begin{proposition}[\cite{DGMS}]\label{prop:criterio1}
A minimal algebra $(\bigwedge V,\,d)$ is formal if and only if the space $V$
can be decomposed into a direct sum $V\,=\, C\oplus N$ with $d(C) \,=\, 0$,
and $d$ injective on $N$, such that every closed element in the ideal
$I(N)\, \subset\, \bigwedge V$ generated by $N$ is exact.
\end{proposition}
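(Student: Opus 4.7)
The plan is to prove the two implications separately; $(\Leftarrow)$ is carried out by an explicit construction of a formality morphism, while $(\Rightarrow)$ requires an inductive construction of the decomposition from an arbitrary formality quasi-isomorphism.

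For $(\Leftarrow)$, assuming $V = C \oplus N$ with the stated properties, I would define $\psi\colon (\bigwedge V, d) \to (H^*(\bigwedge V), 0)$ on generators by $\psi(c) = [c]$ for $c \in C$ (well defined since $dC = 0$) and $\psi(n) = 0$ for $n \in N$, then extend multiplicatively. Using the vector-space splitting $\bigwedge V = \bigwedge C \oplus I(N)$, for each $v \in V$ write $dv = P + Q$ with $P \in \bigwedge C$ and $Q \in I(N)$. Since $d(dv) = 0$ and $dP = 0$, it follows that $Q$ is a closed element of $I(N)$, hence exact by hypothesis, so $[P] = [dv] - [Q] = 0$. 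Consequently $\psi(dv) = \psi(P) = [P] = 0$, so $\psi$ is a chain map. An analogous decomposition shows $\psi^* = \Id$ on cohomology: any closed $z = z_C + z_N$ has $z_N$ closed in $I(N)$ and hence exact, so $[z] = [z_C]$, while $\psi(z) = \psi(z_C) = [z_C]$; comparing, $\psi(z) = [z]$.

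For $(\Rightarrow)$, given a formality quasi-isomorphism $\psi$, I would construct $V = C \oplus N$ by induction on the degree. Setting $C = V \cap \ker d$ with $N$ any complement in $V$ makes $dC = 0$ and $d|_N$ injective automatically, so the content lies in refining $N$ so that every closed element of $I(N)$ is exact. Proceeding inductively on $k$, once the decomposition on $V^{<k}$ is fixed, I would choose a basis of $V^k \cap \ker d$ to be placed in $C^k$, realising the cohomology classes in degree $k$ not already coming from $\bigwedge V^{<k}$, and then extend it to a basis of $V^k$ by vectors that, after modification by closed corrections, span an $N^k$ on which $\psi$ vanishes. Granted this, for any closed $z \in I(N)$ every monomial of $z$ contains an $N$-factor, so $\psi(z) = 0$; since $\psi$ is a quasi-isomorphism, $[z] = 0$ and therefore $z$ is exact.

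The main obstacle lies in carrying out the inductive refinement of $N$ in the $(\Rightarrow)$ direction coherently: one needs the correcting closed representatives to be available within $V^k$ (or equivalently to verify that they can be absorbed into the generator set without leaving $V$), while preserving the complementarity of $N^k$ with $C^k$ and the injectivity of $d|_{N^k}$. The minimality hypothesis $dV \subset \bigwedge^{\geq 2} V$ is essential here, since it guarantees that $dv$ for $v \in V^k$ already belongs to $\bigwedge V^{<k}$ where the induction hypothesis is in force, and the surjectivity of $\psi^*$ on cohomology supplies the closed representatives required for the adjustments. Once this coherent choice has been made through all degrees, the cohomological conclusion above closes the proof.
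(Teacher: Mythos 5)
The paper itself does not prove Proposition \ref{prop:criterio1}: it is quoted from \cite{DGMS} without proof, so your attempt can only be measured against the standard argument. Your $(\Leftarrow)$ half is complete and correct, and is exactly the usual proof: define $\psi(c)=[c]$, $\psi(n)=0$, use the splitting $\bigwedge V=\bigwedge C\oplus I(N)$ to see that $\psi$ is a chain map and induces the identity on cohomology (note that $d|_N$ injective is not even needed there; its role is only to force $C=V\cap\ker d$).

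The $(\Rightarrow)$ direction, however, contains a genuine gap, and it is precisely the one you flag yourself without closing. Your strategy reduces to choosing the complement $N^k$ of $C^k=V^k\cap\ker d$ inside $\ker\psi$, i.e.\ to showing $\psi(V^k)\subseteq\psi(C^k)$ after ``closed corrections''. But the surjectivity of $\psi^*$ only supplies, for the class $\psi(v)\in H^k$, a closed representative $\xi\in(\bigwedge V)^k$, which is in general decomposable; minimality does not place $\xi$ in $V^k$, and replacing $v$ by $v-\xi$ leaves the generating space $V$, so the proposed ``refinement of $N^k$'' is not defined. Since $C$ is forced, the only freedom is the choice of complement, and nothing in your argument rules out a formality quasi-isomorphism $\psi$ sending some non-closed generator to a class not realizable by closed generators: already for the model of $S^2\times S^3$, $\bigwedge(a_2,x_3,y_3)$ with $dx=a^2$, one may take $\psi(x)=[y]$, and it is only an accident of that example (the class $[y]$ is a generator class) that the complement can be rotated into $\ker\psi$. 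A correct proof needs an additional idea: either an induction in which the quasi-isomorphism $\psi$ itself is modified degree by degree together with $N$, or the route through the bigraded model of $(H^*(\bigwedge V),0)$ in the sense of Halperin--Stasheff, which carries such a decomposition by construction, followed by transporting the decomposition through an isomorphism of minimal models while controlling the fact that generators are preserved only up to decomposable terms. As written, the ``only if'' half --- the substantive half of the criterion --- is asserted rather than proved.
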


This characterization of formality can be weakened using the concept of
$s$-formality introduced in \cite{FM}.

\begin{definition}\label{def:primera}
A minimal algebra $(\bigwedge V,\,d)$ is $s$-formal
($s\,>\, 0$) if for each $i\,\leq\, s$
the space $V^i$ of generators of degree $i$ decomposes as a direct
sum $V^i\,=\,C^i\oplus N^i$, where the spaces $C^i$ and $N^i$ satisfy
the three following conditions:
\begin{enumerate}
\item $d(C^i) \,=\, 0$,

\item the differential map $d\,\colon\, N^i\,\longrightarrow\, \bigwedge V$ is
injective, and

\item any closed element in the ideal
$I_s\,=\,I(\bigoplus\limits_{i\leq s} N^i)$, generated by the space
$\bigoplus\limits_{i\leq s} N^i$ in the free algebra $\bigwedge
(\bigoplus\limits_{i\leq s} V^i)$, is exact in $\bigwedge V$.

\end{enumerate}
\end{definition}

A differentiable manifold $M$ is $s$-formal if its minimal model
is $s$-formal. Clearly, if $M$ is formal then $M$ is $s$-formal for all $s\,>\,0$.
The main result of \cite{FM} shows that sometimes the weaker
condition of $s$-formality implies formality.

\begin{theorem}[\cite{FM}]\label{fm2:criterio2}
Let $M$ be a connected and orientable compact differentiable
manifold of dimension $2n$ or $(2n-1)$. Then $M$ is formal if and
only if it is $(n-1)$-formal.
\end{theorem}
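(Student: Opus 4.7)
The ``only if'' direction is immediate: if $M$ is formal, then by Proposition \ref{prop:criterio1} the minimal model admits a decomposition $V = C \oplus N$ satisfying the three conditions globally, and restricting to $V^{\leq s}$ gives $s$-formality for every $s$.

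For the ``if'' direction, assume $M$ is $(n-1)$-formal with minimal model $(\bigwedge V, d)$ and quasi-isomorphism $\rho : (\bigwedge V, d) \to (\Omega^*(M), d)$. The plan is to verify the criterion of Proposition \ref{prop:criterio1} by constructing a global decomposition $V = C \oplus N$. For $i \leq n-1$ I take the decomposition $V^i = C^i \oplus N^i$ provided by the hypothesis. For $i \geq n$ I set $C^i = V^i \cap \ker d$ and pick any vector-space complement $N^i$, so conditions (1) and (2) hold by construction. The whole task reduces to verifying condition (3): every closed $\omega$ in the ideal $I(N) \subset \bigwedge V$ is exact.

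I would analyze such $\omega$ by its degree $k$. For $k \leq n-1$ the element lies in $\bigwedge V^{\leq n-1}$ and thus in $I_{n-1}$, and exactness follows at once from $(n-1)$-formality. For $k > \dim M$, we have $H^k(\bigwedge V) \cong H^k(M) = 0$, so $\omega$ is automatically exact. The substantive range is $n \leq k \leq \dim M$, where the main tool is Poincar\'e duality: $\omega$ is exact if and only if $\int_M \rho(\omega) \wedge \rho(\eta) = 0$ for every closed $\eta$ of complementary degree $\dim M - k$. When $\dim M - k \leq n-1$ (automatic if $\dim M = 2n-1$, or if $\dim M = 2n$ and $k \geq n+1$), I represent the class of $\eta$ by a closed $\alpha \in \bigwedge V^{\leq n-1}$ and decompose $\alpha = \alpha_C + \alpha_N$ with $\alpha_C \in \bigwedge C^{\leq n-1}$ and $\alpha_N \in I_{n-1}$; since $\alpha_N$ is closed, $(n-1)$-formality makes it exact, so $\alpha$ is cohomologous to $\alpha_C$ and the pairing reduces to $\int_M \rho(\omega \wedge \alpha_C)$. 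This product is a closed top-degree element of $I(N)$, so the bulk of the argument shrinks to the claim that closed top-degree elements of $I(N)$ are exact, i.e.\ have vanishing integral.

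The main obstacle is the middle-degree case $\dim M = 2n$, $k = n$: here the complementary degree is also $n$, so $\alpha$ lives in $\bigwedge V^{\leq n}$ and may genuinely involve $N^n$-generators that the $(n-1)$-formality hypothesis does not control directly. To handle this I expect to exploit the symmetry of the Poincar\'e pairing under interchanging $\omega$ and $\alpha$, together with an induction on $k$ that reduces middle-degree exactness to the top-degree case, and then a monomial argument that peels off a distinguished $N$-factor from a closed top-degree element of $I(N)$ and applies $(n-1)$-formality to the cofactor. Verifying that this recursion actually closes, and that the top-degree integrals vanish, is where I anticipate the technical heart of the proof to lie.
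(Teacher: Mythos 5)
First, note that this paper does not actually prove Theorem \ref{fm2:criterio2}: it is quoted from \cite{FM}, and the only comment made about its proof (in Section \ref{min-models}) is that it relies solely on $H^*(M)$ being a Poincar\'e duality algebra, which is why it extends to orbifolds. So your proposal can only be measured against the argument in \cite{FM}, and in spirit you are on the right track: the proof there does work with the decomposition $V^i=C^i\oplus N^i$, represents low-degree classes by elements of $\bigwedge C^{\leq n-1}$ (your observation that a closed $\alpha$ of degree $\leq n-1$ splits as $\alpha_C+\alpha_N$ with $\alpha_N$ exact is correct and is used there), and exploits the non-degeneracy of the Poincar\'e pairing. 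Your ``only if'' direction and the easy reductions (degrees $\leq n-1$ via $(n-1)$-formality, degrees above $\dim M$ via vanishing of cohomology) are fine.

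However, as written the proposal has a genuine gap, and it sits exactly where the theorem's content lies. Your entire scheme funnels everything into the claim that every closed top-degree element of $I(N)$ is exact, i.e.\ has vanishing integral, but you give no argument for this, and it does not follow from the duality pairing you invoke: a class of degree $\dim M$ pairs only with $H^0(M)$, so Poincar\'e duality gives no leverage on it, and it is not a formal consequence of $z$ lying in the ideal $I(N)$ that $\rho(z)$ integrates to zero (that the fundamental class cannot be carried by $I(N)$ is essentially the statement being proved). Likewise, the middle-degree case $\dim M=2n$, $k=n$ --- where the complementary classes may themselves involve $N^n$ and are not controlled by the hypothesis --- is deferred to an unspecified ``symmetry plus induction plus monomial argument''; no such argument is actually produced, and it is not clear that the recursion you describe closes rather than circling back to the same two unproved claims. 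These two steps are precisely the technical heart of the proof in \cite{FM}, so what you have is a correct reduction and a correct diagnosis of where the difficulty lies, but not a proof of Theorem \ref{fm2:criterio2}.
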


One can check that any simply connected compact manifold is $2$-formal. Therefore,
Theorem \ref{fm2:criterio2} implies that any simply connected compact manifold of
dimension not more than $6$ is formal. (This result was early proved by Neisendorfer and Miller in \cite{N-Miller}.)
For $7$-dimensional compact manifolds, we have that $M$ is formal if and only if $M$ is $3$-formal.
Moreover, if $M$ is simply connected we have:

\begin{lemma}\label{lem:$3$-formal}
Let $M$ be a $7$-dimensional simply connected compact manifold with $b_2(M)\,\leq 1$.
Then, M is $3$-formal and so formal.
\end{lemma}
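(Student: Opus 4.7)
The plan is to verify $3$-formality of the minimal model $(\bigwedge V,d)$ of $M$ directly from its structure in low degrees, after which Theorem \ref{fm2:criterio2} will give formality. First I would record that simple connectivity forces $V^1=0$, so $C^1=N^1=0$. Next, minimality combined with $V^1=0$ leaves no decomposable elements of degree $3$ in $\bigwedge V$, so $d(V^2)=0$; hence $C^2=V^2$, $N^2=0$, and $\dim V^2=b_2(M)\leq 1$.

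The crux is degree $3$. I would decompose $V^3=C^3\oplus N^3$ with $d|_{C^3}=0$ and $d|_{N^3}$ injective. Because $V^1=0$, the image $d(V^3)$ is forced into $\bigwedge^2 V^2=S^2(V^2)$. If $b_2(M)=0$, this space vanishes and $N^3=0$; condition (3) of Definition \ref{def:primera} is then vacuous. If $b_2(M)=1$, write $V^2=\la x\ra$, so $S^2(V^2)=\la x^2\ra$ is one-dimensional, and injectivity of $d|_{N^3}$ forces $\dim N^3\leq 1$. Either $N^3=0$, which is immediate, or $N^3=\la y\ra$ with $dy=x^2$, and this is the only subcase that needs work.

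In that subcase, let $z_1,\ldots,z_k$ be a basis of $C^3$ (with $k=b_3(M)$). Then $\bigwedge V^{\leq 3}=\RR[x]\otimes\bigwedge(y,z_1,\ldots,z_k)$, and since $y$ has odd degree $y^2=0$. Hence any element of $I_3=I(y)$ can be written as $y\cdot\omega$ with $\omega\in\RR[x]\otimes\bigwedge(z_1,\ldots,z_k)$. The generators $x,z_1,\ldots,z_k$ are all closed, so $d\omega=0$, and therefore $d(y\omega)=x^2\,\omega$. The module $\RR[x]\otimes\bigwedge(z_1,\ldots,z_k)$ is free over the integral domain $\RR[x]$, so multiplication by $x^2$ is injective; thus $d(y\omega)=0$ forces $\omega=0$. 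The only closed element of $I_3$ is $0$, which is trivially exact, so $M$ is $3$-formal.

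The main obstacle looks to be pinning down the structure of $N^3$: one must use $V^1=0$ to confine $d(V^3)$ to the one-dimensional space $S^2(V^2)$, so that at most one Massey-style relation $dy=x^2$ can appear. Once this structural fact is in place, the freeness of $\RR[x]\otimes\bigwedge(z_1,\ldots,z_k)$ over $\RR[x]$ reduces the exactness requirement of Definition \ref{def:primera}(3) to a trivial injectivity statement, and the conclusion follows from Theorem \ref{fm2:criterio2}.
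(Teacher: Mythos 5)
Your proposal is correct and follows essentially the same route as the paper: analyze the minimal model in degrees $\leq 3$ using $V^1=0$, reduce to the single possible relation $dy=x^2$ when $b_2=1$, verify condition (3) of Definition \ref{def:primera}, and conclude via Theorem \ref{fm2:criterio2}. The only (minor) difference is in checking condition (3): you show directly that the only closed element of $I_3$ is $0$, since $d(y\omega)=x^2\omega$ and multiplication by $x^2$ is injective on the free $\RR[x]$-module $\RR[x]\otimes\bigwedge(z_1,\ldots,z_k)$, whereas the paper restricts attention to the degrees where $H^*(M)\neq 0$ and checks that $a\cdot x$ and $a^2\cdot x$ are not closed --- the same computation in slightly different packaging, with your version being a bit more self-contained.
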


\begin{proof}
Let $(\bigwedge V,\,d)$ be the minimal model of $M$. Write
$V^i\,=\,C^i \oplus N^i$, $i\,\leq\, 3$.  Suppose that $b_2(M)\,=\,1$.
Since $M$ is simply connected, we get
$C^1 \,=\, N^1\,=\,0$, $C^2\,=\,\langle a \rangle$, $N^2\,=\,0$ and $V^3\,=\,C^3 \oplus N^3$,
where $N^3$ has at most one element $x$ if $a^2$ defines the zero class
in the cohomology group $H^4(\bigwedge V,\,d)$. If $N^3\,=\,0$, then $M$ is clearly $3$-formal.
If  $N^3\,=\,\langle x \rangle$ with ${d}x\,=\,a^2$, then take $z\in I(N^{\leq 3})$ a closed
element in $\bigwedge V$. As $H^*(\bigwedge V)=H^*(M)$ has only cohomology in degrees $0,2,3,4,5,7$,
it must be $\deg z=5,7$. If $\deg z=5$ then $z=a\cdot\,x$ which is not closed, and if
$\deg z=7$ then $z=a^2\cdot\,x$ which is not closed either.
Thus,  according to Definition \ref{def:primera}, $M$ is $3$-formal, and by Theorem
\ref{fm2:criterio2}, $M$ is formal.

Finally, in the case that $b_2(M)\,=\,0$, then $C^i\,=N^i\,=\,0$, for $i\,=\,1, 2$, $V^3\,=\,C^3$ and $N^3\,=\,0$.
Hence, $M$ is formal.
\end{proof}

In order to detect non-formality, instead of computing the minimal
model, which usually is a lengthy process, one can use Massey
products, which are known to be obstructions to formality. The simplest type
of Massey product is the triple (also known as ordinary) Massey
product, which we define next.

Let $(\mathcal{A},\,d_{\mathcal{A}})$ be a DGA (in particular, it can be the de Rham complex
of differential forms on a differentiable manifold). Suppose that there are
cohomology classes $[a_i]\,\in\, H^{p_i}(\mathcal{A})$, $p_i\,>\,0$,
$1\,\leq\, i\,\leq\, 3$, such that $a_1\cdot a_2$ and $a_2\cdot a_3$ are
exact. Write $a_1\cdot a_2\,=\,d_{\mathcal{A}}x$ and $a_2\cdot a_3\,=\,d_{\mathcal{A}}y$.
The {\it (triple) Massey product} of the classes $[a_i]$ is defined to be
$$
\langle [a_1],[a_2],[a_3] \rangle \,=\,
[ a_1 \cdot y+(-1)^{p_{1}+1} x
\cdot a_3] \in \frac{H^{p_{1}+p_{2}+ p_{3} -1}(\mathcal{A})}{[a_1]\cdot
H^{p_{2}+ p_{3} -1}(\mathcal{A})+[a_3]\cdot H^{p_{1}+ p_{2} -1}(\mathcal{A})}\, .
$$

Note that a Massey product $\langle [a_1],[a_2],[a_3] \rangle$ on $(\mathcal{A},\,d_{\mathcal{A}})$
is zero (or trivial) if and only if there exist $\widetilde{x}, \widetilde{y}\in \mathcal{A}$ such that
$a_1\cdot a_2=d_{\mathcal{A}}\widetilde{x}$, \, $a_2\cdot a_3=d_{\mathcal{A}}\widetilde{y}$\,  and
$0=[ a_1 \cdot \widetilde{y}+(-1)^{p_{1}+1} \widetilde{x}\cdot a_3]\in H^{p_{1}+p_{2}+ p_{3} -1}(\mathcal{A})$.

We wil use also the following property.
\begin{lemma} \label{lemm:massey-models}
Let $M$ be a connected differentiable manifold. Then,  Massey products on $M$
can be calculated by using any model of $M$.
\end{lemma}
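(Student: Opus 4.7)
The plan is to prove the lemma in two stages: first, to establish that triple Massey products are natural under morphisms of DGAs, and hence preserved (up to the induced isomorphism in cohomology) by any quasi-isomorphism; and second, to use the definition of \emph{model} to connect $M$ and $\mathcal{A}$ through a common minimal model.

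For the first stage, I would take a morphism $f\colon (\mathcal{A}, d_{\mathcal{A}}) \to (\mathcal{B}, d_{\mathcal{B}})$ of DGAs and cohomology classes $[a_i] \in H^{p_i}(\mathcal{A})$ for which $\langle [a_1], [a_2], [a_3] \rangle$ is defined. Writing $a_1 \cdot a_2 = d_{\mathcal{A}} x$ and $a_2 \cdot a_3 = d_{\mathcal{A}} y$, one obtains $f(a_1)\cdot f(a_2) = d_{\mathcal{B}} f(x)$ and $f(a_2)\cdot f(a_3) = d_{\mathcal{B}} f(y)$, so the Massey product $\langle f^*[a_1], f^*[a_2], f^*[a_3] \rangle$ is defined in $H^*(\mathcal{B})$ and is represented by $f\bigl(a_1 \cdot y + (-1)^{p_1+1} x \cdot a_3\bigr)$. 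Thus $f^*$ sends one Massey product into the other, modulo the respective indeterminacies. When $f$ is a quasi-isomorphism, $f^*$ is an isomorphism of graded algebras on cohomology; since the indeterminacy of $\langle [a_1], [a_2], [a_3] \rangle$ depends only on the cohomology ring together with the classes $[a_1]$ and $[a_3]$, the map $f^*$ carries the indeterminacy of the source exactly onto the indeterminacy of the target. The two Massey products therefore agree under the canonical identification $f^*$.

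For the second stage, I would invoke the definition of a model given in the excerpt: $(\mathcal{A}, d_{\mathcal{A}})$ is a model of $M$ precisely when it shares its minimal model with the de Rham complex $(\Omega^*(M), d)$. By Halperin's theorem (cited in the excerpt), this yields quasi-isomorphisms $\rho_{\mathcal{A}} \colon (\mathcal{M}, d_{\mathcal{M}}) \to (\mathcal{A}, d_{\mathcal{A}})$ and $\rho_M \colon (\mathcal{M}, d_{\mathcal{M}}) \to (\Omega^*(M), d)$ from a common minimal DGA. Applying the first stage to $\rho_M$ and to $\rho_{\mathcal{A}}$, one sees that Massey products on $M$ correspond bijectively (with their indeterminacies) to Massey products in $\mathcal{M}$, and these in turn correspond to Massey products in $\mathcal{A}$. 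Composing the two canonical identifications $H^*(M) \cong H^*(\mathcal{M}) \cong H^*(\mathcal{A})$ produces the desired correspondence.

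The only delicate point, and the main potential obstacle, is to verify carefully that the coset structure of the Massey product is preserved, i.e.\ that the image under $f^*$ of the indeterminacy coincides with the indeterminacy of the images. This reduces to checking that the indeterminacy is defined purely in terms of the cup-product on cohomology and the classes $[a_1], [a_3]$, which are all preserved by the graded-algebra isomorphism induced by a quasi-isomorphism. Once this is verified, the rest of the argument is essentially bookkeeping.
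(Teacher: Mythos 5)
Your proposal is correct and follows essentially the same route as the paper: the paper's proof also reduces the statement to the naturality of triple Massey products under a quasi-isomorphism $\varphi$ (using $\varphi(x),\varphi(y)$ as the new primitives), with the zig-zag through the common minimal model left implicit. Your extra care about the indeterminacy being carried onto the target's indeterminacy by the induced ring isomorphism is a point the paper glosses over, but it is the same argument.
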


\begin{proof}
It is enough to prove that if $\varphi:(\mathcal{A},\,d_{\mathcal{A}}) \to (\mathcal{B},\,d_{\mathcal{B}})$
is a quasi-isomorphism, then $\varphi^*(\langle [a_1],[a_2],[a_3] \rangle)=\langle [a_1'],[a_2'],[a_3'] \rangle$, for
$[a_j']=\varphi^*([a_j])$. But this is clear, take
$a_1\cdot a_2=d_{\mathcal{A}}{x}$, \, $a_2\cdot a_3=d_{\mathcal{A}}{y}$\,  and let
 $$
  f=[ a_1 \cdot {y}+(-1)^{p_{1}+1} {x}\cdot a_3]\in
  \frac{H^{p_{1}+p_{2}+ p_{3} -1}(\mathcal{A})}{[a_1]\cdot
  H^{p_{2}+ p_{3} -1}(\mathcal{A})+[a_3]\cdot H^{p_{1}+ p_{2} -1}(\mathcal{A})}\,
 $$
be its Massey product $\langle [a_1],[a_2],[a_3] \rangle$. Then $a_j'=\varphi(a_j)$ satisfy
$a_1'\cdot a_2'=d_{\mathcal{B}}{x'}$, \, $a_2'\cdot a_3'=d_{\mathcal{B}}{y'}$, where
$x'=\varphi(x)$, $y'=\varphi(y)$. Therefore
 $$
  f'=[ a_1' \cdot {y'}+(-1)^{p_{1}+1} {x'}\cdot a_3'] =\varphi^*(f) \in
  \frac{H^{p_{1}+p_{2}+ p_{3} -1}(\mathcal{B})}{[a_1']\cdot
  H^{p_{2}+ p_{3} -1}(\mathcal{B})+[a_3']\cdot H^{p_{1}+ p_{2} -1}(\mathcal{B})}\,
 $$
is the Massey product $\langle [a_1'],[a_2'],[a_3'] \rangle$.
 \end{proof}

The existence of a non-zero Massey product is an obstruction to
the formality. We have the following result, initially proved in~\cite{DGMS}.

\begin{lemma} \label{lem:criterio1}
 If $M$ has a non-trivial Massey product then $M$ is non-formal.
\end{lemma}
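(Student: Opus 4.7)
The plan is to prove the contrapositive: if $M$ is formal, then every defined Massey product on $M$ is trivial. The strategy is to transport the Massey product computation from $\Omega^*(M)$ to the cohomology algebra $(H^*(M),0)$ with zero differential, where it becomes tautologically zero, using the quasi-isomorphisms provided by formality together with Lemma \ref{lemm:massey-models}.

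More precisely, assume $M$ is formal. Let $(\bigwedge V, d)$ be its minimal model, with the quasi-isomorphism $\rho\colon (\bigwedge V,d)\to (\Omega^*(M),d)$. By formality, there also exists a morphism of DGAs $\psi\colon (\bigwedge V,d)\to (H^*(\bigwedge V),0)=(H^*(M),0)$ inducing the identity in cohomology; in particular $\psi$ is a quasi-isomorphism. Thus $(\bigwedge V,d)$ is a minimal model of $(H^*(M),0)$ as well, so $(H^*(M),0)$ is a model of $M$ in the sense defined above. By Lemma \ref{lemm:massey-models}, Massey products on $M$ can therefore be computed inside $(H^*(M),0)$.

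Now take any triple of cohomology classes $[a_1],[a_2],[a_3]\in H^*(M)$ for which the Massey product $\langle [a_1],[a_2],[a_3]\rangle$ is defined, meaning $[a_1]\cdot[a_2]=0$ and $[a_2]\cdot[a_3]=0$. Working in $(H^*(M),0)$, the products $a_1\cdot a_2$ and $a_2\cdot a_3$ are then literally zero as elements, so we may choose the primitives $x=0$ and $y=0$ satisfying $a_1\cdot a_2=d(0)$ and $a_2\cdot a_3=d(0)$. The defining representative becomes
$$
a_1\cdot y + (-1)^{p_1+1}x\cdot a_3 \,=\, 0,
$$
so $\langle [a_1],[a_2],[a_3]\rangle=0$ in $(H^*(M),0)$. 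By Lemma \ref{lemm:massey-models}, the corresponding Massey product on $M$ is zero as well. This contradicts the hypothesis that $M$ admits a non-trivial Massey product, forcing $M$ to be non-formal.

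The argument is essentially formal manipulation once one has Lemma \ref{lemm:massey-models} in hand; the only subtle point is making sure the indeterminacy groups match up, but this is automatic because the quasi-isomorphisms $\rho$ and $\psi$ induce isomorphisms on the full cohomology, hence on the quotient groups in which the Massey products live. No further obstacle arises.
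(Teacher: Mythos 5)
Your proof is correct and follows essentially the same route as the paper: pass to the model $(H^*(M),0)$ provided by formality, invoke Lemma \ref{lemm:massey-models} to compute Massey products there, and take the primitives $x=y=0$ to get triviality. The remark about the indeterminacy subgroups being carried over by the induced isomorphisms is a fine (and correct) extra precision, but nothing more is needed.
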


\begin{proof}
 Suppose that $M$ is formal and let us see that all the Massey products are trivial.
 Let $a_1,a_2,a_3$ be cohomology classes on $M$ with $a_1\cdot a_2=a_2\cdot a_3=0$. By Lemma
 \ref{lemm:massey-models}, to compute the Massey we can use any model for $M$. By definition of formality,
$(H^*(M),0)$ is a model for $M$. In this model we can use $x=0, y=0$ for
$a_1\cdot a_2=d {x}$, $a_2\cdot a_3=d{y}$. So the Massey product is 
$\langle [a_1],[a_2],[a_3] \rangle=[ a_1 \cdot {y}+(-1)^{p_{1}+1} {x}\cdot a_3]=0$.
 \end{proof}

The concept of formality is also defined for CW-complexes which
have a minimal model $(\bigwedge V,\,d)$. Such a minimal model is
constructed as the minimal model associated to the differential
complex of piecewise-linear polynomial forms \cite{GM}. We shall
not need this in full generality, but we shall use the case when
$X$ is an orbifold. Thus, since the proof of Theorem \ref{fm2:criterio2} given in \cite{FM}
only uses that the cohomology $H^*(M)$ is a Poincar\'e duality algebra,
Theorem \ref{fm2:criterio2} also holds for compact connected orientable orbifolds.

\section{Formality of $3$-Sasakian manifolds}\label{$3$-sasaki:formal}

We recall the notion of $3$-Sasakian manifolds
following \cite{Blair,BG-2,BG}.
An odd dimensional Riemannian manifold $(N,g)$ is Sasakian if its 
cone $(N\times{\mathbb{R}}^+, g^c = t^2 g+dt^2)$ is K\"ahler, that
is the cone metric $g^c = t^2 g+dt^2$ admits a compatible
integrable almost complex structure $J$ so that
$(N\times{\mathbb{R}}^+, g^c = t^2 g+dt^2, J)$ is a K\"ahler
manifold. In this case the  Reeb vector field $\xi=J\partial_t$ is
a Killing vector field of unit length. The corresponding $1$-form
$\eta$ defined by $\eta(X)=g(\xi,X)$, for any vector field $X$ on
$N$, is a contact form. Let $\nabla$ be the Levi-Civita connection
of $g$. The (1,1) tensor $\phi X=\nabla_X\xi$ satisfies the
identities 
 $$
 \phi^2=-Id+\eta\otimes\xi, \quad g(\phi X,\phi Y)=g(X,Y)-\eta(X)\eta(Y), 
\quad d\eta(X,Y)=2g(\phi X,Y),
 $$
for vector fields $X,Y$.

A collection of three Sasakian structures on a
$(4n+3)$-dimensional Riemannian manifold satisfying
quaternionic-like identities form a $3$-Sasakian structure. More
precisely, a Riemannian manifold $(N, g)$ of dimension $4n+3$ is
called $3$-Sasakian if its cone $(N\times{\mathbb{R}}^+, g^c = t^2
g\,+\,dt^2)$ is hyperk\"ahler, that is the metric $g^c = t^2
g\,+\,dt^2$ admits three compatible integrable almost complex
structure $J_s$, $s=1,2,3$, satisfying the quaternionic relations,
i.e.,\ $J_1J_2=-J_2J_1=J_3$, such that $(N\times{\mathbb{R}}^+,
g^c = t^2 g\,+\,dt^2, J_1, J_2, J_3)$ is a hyperk\"ahler manifold.
Equivalently, the holonomy group of the cone metric $g^c$ is a
subgroup of $\mathrm{Sp}(n+1)$. In this case the Reeb vector
fields $\xi_s=J_s\partial_t$ $(s=1,2,3)$ are Killing vector
fields. The  three Reeb vector fields $\xi_s$, the three
$1$-forms  $\eta_s$ and the three $(1,1)$ tensors $\phi_s$, where
$s=1,2,3$, satisfy the relations
\begin{align*}
&\eta_i(\xi_j)=g(\xi_i,\xi_j)=\delta_{ij}, \\
&\phi_i\,\xi_j=-\phi_j\,\xi_i=\xi_k, \\
&\eta_i\circ\phi_j=-\eta_j\circ\phi_i=\eta_k,\\
&\phi_i\circ\phi_j-\eta_j\otimes\xi_i=-\phi_j\circ\phi_i+\eta_i\otimes\xi_j=\phi_k,
\end{align*}
for any cyclic permutation $(i, j, k)$ of $(1, 2, 3)$.

The  Reeb vector fields $\xi_s$ satisfy the relations
$[\xi_i,\xi_j]=2\xi_k$ thus spanning an integrable $3$-dimensional
distribution on a $3$-Sasakian manifold. In order to prove Theorem \ref{th:main}, we use the two following
results about the three dimensional $3$-Sasakian foliation
 proved by Boyer and Galicki in \cite{BG-2}.

\begin{proposition} [\cite{BG-2}]\label{prop: leaves}
Let $(N, g)$ be a $3$-Sasakian manifold such that the
{Reeb} vector fields $(\xi_1, \xi_2, \xi_3)$ are complete.
Denote by ${\mathcal{F}}$ the canonical three dimensional
foliation on $N$. Then,
\begin{enumerate}
\item[i)] The leaves of ${\mathcal{F}}$ are totally geodesic spherical space forms $\Gamma{\backslash}S^3$ of constant curvature
one, where $\Gamma\,\subset \mathrm{Sp}(1) =  \mathrm{SU}(2)$ is a finite subgroup.
\item[ii)] The $3$-Sasakian structure on $M$ restricts to a $3$-Sasakian structure on each leaf.
\item[iii)] The generic leaves are either $\mathrm{SU}(2)$ or $\mathrm{SO}(3)$.
\end{enumerate}
\end{proposition}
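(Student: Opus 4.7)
The plan is to extract as much as possible directly from the algebraic identities already listed for a $3$-Sasakian structure, and then invoke classical results (Frobenius, Killing–Hopf, the principal orbit theorem) to finish off the three assertions.

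First, I would verify that $\mathcal{F}$ really is a foliation whose leaves are orbits of an $\mathrm{SU}(2)$-action. Since $g(\xi_i,\xi_j)=\delta_{ij}$ the Reeb fields are pointwise linearly independent, and $[\xi_i,\xi_j]=2\xi_k$ shows the distribution they span is involutive, so Frobenius gives the foliation. The bracket relations are precisely those of $\mathfrak{su}(2)$, and completeness of the $\xi_i$ means their joint flow integrates to a (locally free) action of the simply connected group $\mathrm{SU}(2)$ on $N$, whose orbits are exactly the leaves of $\mathcal{F}$. In particular each leaf is a homogeneous space $\mathrm{SU}(2)/\Gamma$ for some (discrete) isotropy subgroup $\Gamma$.

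For (i), the totally geodesic property follows from the Sasakian identity $\phi_i X=\nabla_X\xi_i$: specializing to $X=\xi_j$ gives $\nabla_{\xi_j}\xi_i=\phi_i\xi_j$, which by the cyclic relation $\phi_i\xi_j=\xi_k$ is tangent to the leaf. Hence the second fundamental form vanishes. To get constant curvature $1$, I would restrict the metric to a leaf and observe that $\xi_1,\xi_2,\xi_3$ form an orthonormal frame of unit Killing fields satisfying the $\mathfrak{su}(2)$ bracket relations with the correct normalization; a direct Koszul-formula computation (or, equivalently, recognition of this as the bi-invariant metric on $\mathrm{SU}(2)$ with the standard normalization) yields sectional curvature $1$. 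Each leaf is therefore a complete connected Riemannian manifold of constant curvature $1$, so the Killing–Hopf theorem identifies it with $\Gamma\backslash S^3$ for $\Gamma$ acting freely and isometrically. The fact that the three Reeb vector fields (equivalently, the hyperk\"ahler structure on the metric cone) are preserved forces $\Gamma$ into the right-multiplication copy of $\mathrm{Sp}(1)=\mathrm{SU}(2)$ inside $\mathrm{SO}(4)=(\mathrm{Sp}(1)_L\times\mathrm{Sp}(1)_R)/\{\pm 1\}$, since right translations are exactly the isometries of $S^3$ commuting with the left $\mathrm{SU}(2)$ generated by the $\xi_s$.

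Part (ii) is then immediate: the tensors $\xi_s,\eta_s,\phi_s$ all preserve the tangent bundle of a leaf (this is precisely what the identities $\phi_i\xi_j=\xi_k$ and $\eta_i(\xi_j)=\delta_{ij}$ say on the leaf), and the Sasakian/$3$-Sasakian axioms survive restriction. For (iii) I would invoke the principal orbit theorem for the $\mathrm{SU}(2)$-action: there is an open dense subset of $N$ on which all orbits have a common isotropy type $\Gamma_0\subset\mathrm{Sp}(1)$ up to conjugacy. The main obstacle is pinning down that $\Gamma_0\in\{1,\{\pm 1\}\}$, which I would handle by using that the leaf space of the generic stratum is a smooth quaternionic K\"ahler manifold, and that the twistor space over it (an $S^2$-bundle) carries a K\"ahler–Einstein structure forcing $\Gamma_0$ to be central in $\mathrm{Sp}(1)$; the only central finite subgroups are the trivial group and $\{\pm 1\}$, giving respectively $\mathrm{SU}(2)$ and $\mathrm{SO}(3)=\mathrm{SU}(2)/\{\pm 1\}$ as the two possible generic leaves.
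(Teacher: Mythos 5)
The paper offers no proof of this proposition --- it is quoted verbatim from Boyer--Galicki \cite{BG-2} --- so your attempt has to stand on its own. For parts i) and ii) it essentially does: the Frobenius/Palais argument (complete fields spanning a copy of $\mathfrak{su}(2)$ integrate to a locally free $\mathrm{SU}(2)$-action whose orbits are the leaves, with finite isotropy $\Gamma\subset\mathrm{SU}(2)=\mathrm{Sp}(1)$), the vanishing of the second fundamental form from $\nabla_{\xi_j}\xi_i=\phi_i\xi_j\in\{0,\pm\xi_k\}$ (do also record the case $i=j$, where $\phi_i\xi_i=\nabla_{\xi_i}\xi_i=0$ since $\xi_i$ is a unit Killing field), and the curvature computation $K=\tfrac14\,|[\xi_i,\xi_j]|^2=1$ for the bi-invariant metric with $[\xi_i,\xi_j]=2\xi_k$, are all correct and are in the spirit of the original Boyer--Galicki--Mann proof. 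Note that once you have the orbit identification $\mathrm{SU}(2)/\Gamma_x$ you already have $\Gamma\subset\mathrm{Sp}(1)$; the detour through Killing--Hopf and ``isometries commuting with the left action are right translations'' is harmless but redundant.

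The genuine gap is in iii). The principal orbit theorem (legitimate here, since $\mathrm{SU}(2)$ is compact) only gives a well-defined principal isotropy type $\Gamma_0$; the whole content of iii) is that $\Gamma_0\subset\{\pm1\}$, and your justification --- that the leaf space of the generic stratum is quaternionic K\"ahler and the K\"ahler--Einstein structure on its twistor space ``forces $\Gamma_0$ to be central'' --- is not an argument: no mechanism is given by which the twistor geometry constrains the isotropy group, and the smooth quotient/twistor structure on the principal stratum is normally established \emph{after} the structure of the foliation is known, so the reasoning risks circularity. The standard (and correct) way to close this is the slice representation: a principal isotropy element $\gamma\in\Gamma_x\subset\mathrm{Sp}(1)$ acts trivially on the slice at $x$, i.e.\ on the horizontal space $\mathcal{H}_x=\bigcap_s\ker\eta_s$; but $d\gamma$ preserves $\mathcal{H}_x$ and acts on it through the quaternionic structure generated by $\phi_1,\phi_2,\phi_3$ (equivalently, through the $\mathrm{Sp}(1)$-factor of the cone's hyperk\"ahler structure), i.e.\ as scalar quaternionic multiplication by $\gamma$ on $\mathcal{H}_x\cong\mathbb{H}^n$, which is trivial only for $\gamma=\pm1$. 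Hence $\Gamma_0\in\{\,\{1\},\{\pm1\}\,\}$ and the generic leaf is $\mathrm{SU}(2)$ or $\mathrm{SO}(3)$ (this step also uses $n\geq1$ so that $\mathcal{H}_x\neq0$). Without this, or an equivalent computation of the isotropy action on the normal space, part iii) is not proved.
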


\begin{theorem} [\cite{BG-2}]\label{th:structure}
Let $(N, g)$ be a $3$-Sasakian manifold of dimension $4n + 3$
such that the {Reeb} vector fields $(\xi_1, \xi_2, \xi_3)$ are complete. 
Then the space of leaves $N /{\mathcal{F}}_3$ has the structure of
 a quaternionic K\"ahler orbifold
 $(\mathcal{O}, g_{\mathcal{O}})$ of dimension $4n$ such that the natural projection
 $\pi\colon N \,\longrightarrow\,{\mathcal{O}}$ is a principal V-bundle with group $\mathrm{SU}(2)$ or $\mathrm{SO}(3)$,
  and  $\pi$ is a Riemannian orbifold submersion such that the scalar curvature of
$g_{\mathcal{O}}$ is $16n(n + 2)$.
\end{theorem}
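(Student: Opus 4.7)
The plan is to integrate the infinitesimal $\mathfrak{su}(2)$-action generated by $\xi_1,\xi_2,\xi_3$ to a global isometric group action, use the orbit structure (already catalogued in Proposition \ref{prop: leaves}) to give the leaf space an orbifold structure, and then transfer the quaternionic data on horizontal directions down to the quotient.

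First I would use completeness together with the bracket relations $[\xi_i,\xi_j]=2\xi_k$ to integrate the three Reeb fields to a smooth action of a connected $3$-dimensional Lie group with Lie algebra $\mathfrak{su}(2)$; this group must be either $\mathrm{SU}(2)$ or $\mathrm{SO}(3)$, and Proposition \ref{prop: leaves}~(iii) identifies the generic stabilizer and hence which group actually acts effectively. Because each $\xi_i$ is Killing, the action is isometric; the leaves being compact spherical space forms $\Gamma\backslash S^3$ forces properness, so by the slice theorem the quotient $\mathcal{O} = N/\mathcal{F}_3$ inherits the structure of a $4n$-dimensional smooth orbifold and $\pi\colon N\to \mathcal{O}$ is a principal V-bundle with the prescribed structure group. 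The horizontal distribution $\mathcal{H}\subset TN$ orthogonal to $\xi_1,\xi_2,\xi_3$ is preserved by the action (since the $\xi_i$ are Killing), so $g|_{\mathcal{H}}$ descends to a Riemannian orbifold metric $g_{\mathcal{O}}$ making $\pi$ a Riemannian orbifold submersion.

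Next I would construct the quaternionic K\"ahler structure on $(\mathcal{O},g_{\mathcal{O}})$. Each tensor $\phi_s$ preserves $\mathcal{H}$ (since $\phi_s\xi_j=\pm\xi_k$), and the identity $\phi_i\circ\phi_j-\eta_j\otimes\xi_i=\phi_k$ restricts on $\mathcal{H}$ to the quaternionic relations $J_iJ_j=J_k$. Individually the $\phi_s$ are not invariant under the $\mathrm{SU}(2)$-action, but the real span $Q:=\langle\phi_1,\phi_2,\phi_3\rangle\subset\mathrm{End}(\mathcal{H})$ is invariant, rotated by the adjoint representation $\mathrm{SU}(2)\to\mathrm{SO}(3)$. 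Hence $Q$ descends to a well-defined rank-$3$ subbundle $Q_{\mathcal{O}}\subset\mathrm{End}(T\mathcal{O})$ locally spanned by a quaternionic triple of almost complex structures compatible with $g_{\mathcal{O}}$. To verify that $Q_{\mathcal{O}}$ is parallel with respect to the Levi-Civita connection of $g_{\mathcal{O}}$, I would apply the O'Neill formulas relating $\nabla^{\mathcal{O}}$ to $\nabla^N$ on basic vector fields, combined with the three Sasakian structure equations
\[
(\nabla^N_X\phi_s)Y=g(X,Y)\xi_s-\eta_s(Y)X,
\]
and show that on horizontal vectors $\nabla^{\mathcal{O}}$ carries each $\phi_s$ into a linear combination of the remaining $\phi_t$, which is exactly the definition of quaternionic K\"ahler in dimension $\geq 8$; for the low-dimensional case $n=1$ one instead verifies self-duality and Einstein directly. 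Equivalently, the triple $(\eta_1,\eta_2,\eta_3)$ furnishes a natural connection on the principal V-bundle $\pi$ whose curvature is proportional to $\sum_s d\eta_s\otimes\xi_s$, and the standard correspondence between such principal bundles and quaternionic K\"ahler base orbifolds then does the job.

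Finally, for the scalar curvature claim I would invoke the O'Neill submersion formulas: a $(4n+3)$-dimensional $3$-Sasakian manifold is Einstein with $\mathrm{Ric}^N=(4n+2)g$, hence $s_N=(4n+3)(4n+2)$; the fibers are totally geodesic spherical space forms of constant curvature $1$, hence of scalar curvature $6$; and the integrability tensor $A$ of the submersion is governed by $d\eta_s(X,Y)=2g(\phi_s X,Y)$ restricted to $\mathcal{H}$. Substituting these ingredients into $s_N=s_{\mathcal{O}}\circ\pi + s_{\text{fiber}}-\|A\|^2-\|T\|^2$ (with $T\equiv 0$ since the fibers are totally geodesic) and using that $\|A\|^2$ reduces to a constant multiple of $n$ leaves a short arithmetic identity whose outcome is $s_{\mathcal{O}}=16n(n+2)$.

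The principal obstacle is the parallelism of $Q_{\mathcal{O}}$: one has to control how the submersion's Levi-Civita connection simultaneously interchanges the three $\phi_s|_{\mathcal{H}}$, which forces a careful projection of all three Sasakian structure equations onto $\mathcal{H}$ and a check that the resulting rotation lies in the $\mathfrak{sp}(1)$-part of $\mathfrak{sp}(n)\oplus\mathfrak{sp}(1)$. Handling this over the orbifold (rather than manifold) points simply requires lifting the argument to local $\mathrm{SU}(2)$-equivariant charts, where the construction is manifold-level and the descent is automatic by invariance.
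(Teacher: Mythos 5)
This statement is quoted in the paper from Boyer--Galicki \cite{BG-2} without any proof, so there is no internal argument to compare against; your outline is essentially the original Boyer--Galicki(--Mann) proof. Your steps are sound: completeness plus $[\xi_i,\xi_j]=2\xi_k$ integrates (Palais) to an isometric action of $\mathrm{Sp}(1)$, which is automatically proper by compactness of the group, is locally free since the $\xi_i$ are orthonormal, and the slice theorem gives the orbifold quotient and the principal V-bundle with effective group $\mathrm{SU}(2)$ or $\mathrm{SO}(3)$; the $\mathrm{Ad}$-invariant span $\langle\phi_1,\phi_2,\phi_3\rangle$ restricted to $\mathcal{H}$ descends (each $\phi_s$ alone does not), and the O'Neill formulas together with $(\nabla_X\phi_s)Y=g(X,Y)\xi_s-\eta_s(Y)X$ yield the $\mathrm{Sp}(n)\mathrm{Sp}(1)$ reduction for $n\geq 2$ and self-dual Einstein for $n=1$; and the scalar curvature bookkeeping checks out, since $\mathrm{Ric}_N=(4n+2)g$, the fibers have scalar curvature $6$, $T=0$, and $\|A\|^2=12n$ (from $A_XY=-\sum_s g(\phi_sX,Y)\xi_s$), giving $s_{\mathcal{O}}=(4n+3)(4n+2)-6+12n=16n(n+2)$. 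The only place where you compress real work is the parallelism of the descended rank-$3$ bundle, which you flag honestly; filling it in requires the equivariant-chart computation you describe, exactly as in the cited reference.
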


\bigskip

\noindent{\bf Proof of Theorem~ \ref{th:main}:}

Consider $(N, g)$ a $7$-dimensional simply connected compact $3$-Sasakian manifold whose second Betti number is
$b_{2}(N)\,\leq\,1$. Then  $N$ is formal by Lemma \ref{lem:$3$-formal}.
The converse is equivalent to prove that if the compact  $3$-Sasakian manifold
$(N, g)$ has $b_{2}(N)\,=\,k>1$, then $N$ is non-formal.
To this end, we will show that $N$ has a non-trivial Massey product.

Denote by $\mathcal{F}$ the canonical three dimensional foliation
on $N$. Since $N$ is compact, the {Reeb} vector fields
$(\xi_1, \xi_2, \xi_3)$ are complete. Then, by Proposition
\ref{prop: leaves}, the leaves of $\mathcal{F}$ are quotients
$\Gamma{\backslash}S^3$, where $\Gamma\,\subset \mathrm{Sp}(1) =
\mathrm{SU}(2)$ is a finite subgroup. Theorem \ref{th:structure}
implies that there is an orbifold $S^3$-bundle
 $S^3\,\longrightarrow\, N \,\longrightarrow\,{\mathcal{O}}$, where $\mathcal{O}$ is a compact quaternionic K\"ahler orbifold of
dimension $4$, with Euler class given by the integral cohomology class $\Omega\in H^4(\mathcal{O})$ of the quaternionic $4$-form.
Note that $\mathcal{O}$ is simply connected because $N$ is so
(see \cite[Theorem 4.3.18]{BG}). So  $S^3\,\longrightarrow\, N \,\longrightarrow\,{\mathcal{O}}$ is
a rational fibration (that is, after rationalization of the spaces, it becomes
a fibration). Therefore \cite{RS}, if $(\mathcal{A}, d_{\mathcal{A}})$ is a model of $\mathcal{O}$, then
$(\mathcal{A} \otimes \bigwedge(z), d)$, with $|z|\,=\,3$, $d\vert_{\mathcal{A}}\,=\,d_{\mathcal{A}}$
and $dz\,=\,\Omega$, is a model of $N$.

Moreover,  $\mathcal{O}$ is formal because it is a simply connected compact orbifold of dimension $4$ and
Theorem \ref{fm2:criterio2} also holds for orbifolds. Thus, a model of $\mathcal{O}$ is
$(H^*(\mathcal{O}), 0)$, where $H^*(\mathcal{O})$ is the
cohomology algebra of $\mathcal{O}$. Hence, a model of $N$ is the differential algebra
$(H^*(\mathcal{O})\otimes \bigwedge(z), d)$, with $dz=\Omega \in H^4(\mathcal{O})$, and
 $$
 H^1(\mathcal{O})\,=\,H^3(\mathcal{O}) =0\,, \qquad
 H^2(\mathcal{O})\,=\,\langle a_1,a_2,\cdots,a_k\rangle, \quad k\,\geq 2,
 $$
since $b_{2}(N)\,=\,k\,\geq 2$.
Since $H^*(\mathcal{O})$ is a Poincar\'e duality algebra, the intersection pairing is a non-degenerate
quadractic form on $H^2(\mathcal{O})$. Therefore, we can take $a_1,a_2,\ldots, a_k$ an orthogonal basis
of $H^2(\mathcal{O})$, that is $a_i\cdot a_j=0$ for $i\neq j$.
The cohomology of $N$ is
 $$
 \begin{array}{l}
 H^1(N)= H^3(N)=H^4(N)=H^6(N) =0\,, \\
 H^2(N)=\langle a_1,a_2,\cdots,a_k\rangle , \\
 H^5(N)=\langle a_1\, z,a_2\, z,\cdots,a_k\, z\rangle .
\end{array}
 $$
Then $a_1\cdot a_1 \, = \, \Omega=dz$.
Thus the Massey product $\langle a_1, a_1, a_i \rangle=  a_i\, z$ is defined for any $i\in\{1, 2, 3, 4\}$ and, for $i\,\not=\,1$, it is non-trivial.
\hfill$\square$

\subsection*{A $7$-dimensional formal Sasaki-Einstein manifold with $b_2\geq 2$}\label{sasa-einst:formal}
We show an example of  a $7$-dimensional simply connected, compact  Sasaki-Einstein manifold, with second Betti
number $b_2\geq 2$, which is formal. To this end, we recall the following

\begin{theorem} \cite{Munoz-Tralle} \label{thm:formal-$7$-dim}
 Let $N$ be a simply connected compact Sasakian $7$-dimensional manifold.
 Then $N$ is formal if and only if all triple Massey products are trivial.
\end{theorem}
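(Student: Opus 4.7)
The plan is to prove the two directions of the biconditional separately. The forward direction ``$N$ formal implies all triple Massey products are trivial'' is immediate from Lemma \ref{lem:criterio1}, since triple Massey products are a special case of the Massey products which obstruct formality.

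For the converse, assume that every triple Massey product on $N$ vanishes. Since $\dim N = 7$, Theorem \ref{fm2:criterio2} reduces formality to $3$-formality. Let $(\bigwedge V, d)$ be the minimal model of $N$. As $N$ is simply connected, $V^1 = 0$ and every generator in $V^2$ is closed, so $N^1 = N^2 = 0$ and $N$ is automatically $2$-formal. Fix a decomposition $V^3 = C^3 \oplus N^3$ with $d|_{C^3} = 0$ and $d|_{N^3}$ injective; the remaining task, by Definition \ref{def:primera}, is to show that every closed element $z$ of the ideal $I(N^3) \subset \bigwedge V^{\leq 3}$ is exact in $\bigwedge V$.

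With $V^1 = 0$, such a $z$ has total degree in $\{5,6,7\}$. The degree $6$ case is automatic, since Poincar\'e duality gives $H^6(N) \cong H^1(N) = 0$, so any closed element of $\bigwedge V$ of degree $6$ is exact. In degree $5$, $z$ has the form $\sum_\alpha c_\alpha\, x_\alpha$ with $c_\alpha \in V^2$ and $x_\alpha \in N^3$, and the closedness condition $\sum_\alpha c_\alpha \cdot dx_\alpha = 0$ in $\bigwedge^3 V^2$ precisely encodes the cocycle condition for a triple Massey product representative of degree $2$ cohomology classes (the partial products $dx_\alpha \in \bigwedge^2 V^2$ playing the role of the $ab = dx$, $bc = dy$ relations). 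The freedom to modify $z$ by exact terms while staying in $I(N^3)$ matches the indeterminacy in the Massey product, so the assumed triviality of all triple Massey products forces $z$ to be exact in $\bigwedge V$.

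The main obstacle will be the degree $7$ case. Here a closed $z \in I(N^3)$ lies in $\bigwedge^2 V^2 \wedge N^3$, and since $H^7(N) \cong \mathbb{R}$, exactness is equivalent to showing $[z] = 0$ in cohomology. The plan is to invoke the result of \cite{BFMT} that all higher Massey products on a compact Sasakian manifold vanish: a non-exact closed element of $I(N^3)$ in top degree encodes a quadruple (or higher) Massey product representative on $N$, whose vanishing by \cite{BFMT} forces $[z] = 0$. Setting up this correspondence between top-degree closed elements of $I(N^3)$ and higher Massey products, with careful bookkeeping of indeterminacies, is the technical heart of the argument; it is precisely here that the Sasakian hypothesis is indispensable, since for a general simply connected compact $7$-manifold the vanishing of triple Massey products alone would not suffice for formality.
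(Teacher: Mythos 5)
First, note that this paper does not prove the statement at all: it is quoted verbatim from \cite{Munoz-Tralle}, so there is no internal proof to compare with, and your argument has to be judged against what the theorem actually requires. Your forward direction is fine (Lemma \ref{lem:criterio1}), and the reduction of the converse to $3$-formality via Theorem \ref{fm2:criterio2}, together with the observation that degree $6$ is killed by $H^6(N)\cong H^1(N)=0$, is the right frame. The gaps are in the two steps that carry all the content. In degree $5$, a closed element of $I(N^3)$ is a general sum $z=\sum_\alpha c_\alpha x_\alpha$ with $c_\alpha\in V^2$, $x_\alpha\in N^3$ and $\sum_\alpha c_\alpha\, dx_\alpha=0$; this is the cocycle condition for a \emph{matric} (generalized) triple Massey product, not for an ordinary one. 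An ordinary triple product $\langle a,b,c\rangle$ only produces the very special elements $a\,y\pm x\,c$ with $dx=ab$, $dy=bc$, and the triviality of all ordinary triple products does not, for a general simply connected $7$-manifold, force exactness of all such $z$. Your argument uses nothing about Sasakian geometry at this step, so if it worked it would prove the equivalence for arbitrary simply connected compact $7$-manifolds, which is false; the actual proof in \cite{Munoz-Tralle} needs the Sasakian hypothesis precisely here (via the quasi-regular model as an orbifold circle bundle over a compact K\"ahler orbifold and its hard Lefschetz property) to reduce these generalized obstructions to ordinary triple products.

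The degree $7$ step is also not a proof: you declare the correspondence with higher Massey products to be ``the technical heart'' and leave it unestablished. Moreover the numerology does not match your appeal to \cite{BFMT}: a quadruple Massey product of degree-$2$ classes lands in $H^{6}(N)=0$, so the genuine degree-$7$ obstruction, namely a closed element $\sum_\alpha q_\alpha x_\alpha$ with $q_\alpha\in\bigwedge^2 V^2$ and $\sum_\alpha q_\alpha\,dx_\alpha=0$, is again of matric type and is not literally one of the ordinary higher-order Massey products whose vanishing is asserted in \cite{BFMT}. More globally, vanishing of all ordinary Massey products of every order is strictly weaker than formality, so the scheme ``triples vanish by hypothesis, higher ones vanish by \cite{BFMT}, hence formal'' cannot close without extra structural input from the Sasakian condition; as it stands, both nontrivial cases of the $3$-formality check remain open in your proposal.
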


Next we consider $M$ to be the blow up of the complex projective space
${\mathbb{CP}}{}^3$ at 4 points, that is,
 $$
 M\,=\,{\mathbb{CP}}{}^3\#\, \overline{\mathbb{CP}}{}^3\#\, \overline{\mathbb{CP}}{}^3\#\,
 \overline{\mathbb{CP}}{}^3\#\, \overline{\mathbb{CP}}{}^3,
$$
where $\overline{\mathbb{CP}}{}^3$ is ${\mathbb{CP}}{}^3$ with the opposite of the standard orientation.
The de Rham cohomology of $M$ is
\begin{itemize}
 \item $H^0(M)=\langle 1\rangle$,
 \item $H^1(M)=0$,
 \item $H^2(M)=\langle b, a_1, a_2, a_3, a_4 \rangle$,
 \item $H^3(M)=0$,
 \item $H^4(M)=\langle b^2, a_{1}^2, a_{2}^2, a_{3}^2, a_{4}^2\rangle$,
 \item $H^5(M)=0$,
 \item $H^6(M)=\langle b^3\rangle$,
\end{itemize}
where $b$ is the integral cohomology class defined by the  K\"ahler form $\omega$ on $\mathbb{CP}{}^3$.
Among these cohomology classes,
the following relations are satisfied
$$
b^3\,=\,-a_{1}^3 \,=\,-a_{2}^3\,=\,-a_{3}^3\,=\,-a_{4}^3, \qquad b\cdot a_{i} \,=\,0\,=\,a_{i}\cdot a_{j},
\qquad 1\,\leq i\,\leq 4, \quad i\,\not=\,j.
$$

 \begin{theorem} \label{th: formal-sasa-einst}
Let $N$ be the total space of the circle bundle  $S^1\,\longrightarrow\, N \,\longrightarrow\, M$, with Euler class
$\ell b\,-\,\sum_{i=1}^4 \, a_{i}$, where $\ell >0$ is a large integer.
Then $N$ is a simply connected, compact  Sasaki-Einstein manifold, with second Betti
number $b_2=4$, which is formal. Therefore, $N$ does not admit any $3$-Sasakian structure.
\end{theorem}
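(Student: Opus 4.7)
The plan is to realize $N$ as a Boothby--Wang $S^1$-bundle over the formal K\"ahler base $M$ and to deduce formality of $N$ by verifying, through Theorem~\ref{thm:formal-$7$-dim}, that all triple Massey products on $N$ are trivial. The topological inputs are routine: $M$ is the blow-up of $\mathbb{CP}^3$ at four points in general position, hence a Fano threefold admitting a positive K\"ahler--Einstein metric. For $\ell$ large, the class $e=\ell b-\sum_i a_i$ lies in the K\"ahler cone of $M$, and together with the KE metric on $M$ an appropriate Boothby--Wang / Kobayashi construction produces a Sasaki--Einstein structure on the circle bundle. The class $e$ is primitive in $H^2(M;\mathbb{Z})$, since its coordinates $(\ell,-1,-1,-1,-1)$ in the basis $(b,a_1,\dots,a_4)$ have $\gcd=1$; combined with $\pi_1(M)=0$ this forces $\pi_1(N)=0$. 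The Gysin sequence gives $b_2(N)=b_2(M)-1=4$, and a slightly longer computation yields $H^\ast(N)$ concentrated in degrees $0,2,5,7$, with $b_0=b_7=1$, $b_2=b_5=4$ and $H^1=H^3=H^4=H^6=0$.

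Since $M$ is formal, by the rational-fibration formalism the DGA $(\mathcal{A},d)=(H^\ast(M)\otimes\bigwedge(z),d)$ with $|z|=1$ and $dz=e$ is a model of $N$. A triple Massey product $\langle[a_1],[a_2],[a_3]\rangle$ in which some $[a_i]$ lies outside $H^2(N)$ has total degree $p_1+p_2+p_3-1\geq 8$, so lands in $H^{\geq 8}(N)=0$ and is automatically trivial. Only triple Massey products of three classes in $H^2(N)$ need be analysed, and for these the indeterminacy $[a_1]H^3(N)+[a_3]H^3(N)$ vanishes because $H^3(N)=0$; hence these Massey products are well-defined elements of $H^5(N)$.

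For $\alpha,\beta,\gamma\in H^2(M)$ write $\alpha=x_\alpha b+\sum_i y_{\alpha,i}a_i$, and similarly for $\beta,\gamma$. The orthogonality relations $b\cdot a_i=0$ and $a_i\cdot a_j=0$ ($i\neq j$) collapse any pairwise product to $\alpha\beta=x_\alpha x_\beta\, b^2+\sum_i y_{\alpha,i}y_{\beta,i}\, a_i^2$. From $d(bz)=\ell b^2$ and $d(a_iz)=-a_i^2$ we obtain $\alpha\beta=df_{\alpha\beta}$ with $f_{\alpha\beta}=\tfrac{x_\alpha x_\beta}{\ell}\,bz-\sum_i y_{\alpha,i}y_{\beta,i}\,a_iz$, and similarly $f_{\beta\gamma}$ for $\beta\gamma$. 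A direct multiplication, again exploiting the orthogonality of the generators, yields
\[
\alpha\cdot f_{\beta\gamma}-f_{\alpha\beta}\cdot\gamma=\tfrac{x_\alpha x_\beta x_\gamma}{\ell}\,b^2z-\sum_i y_{\alpha,i}y_{\beta,i}y_{\gamma,i}\,a_i^2 z-\tfrac{x_\alpha x_\beta x_\gamma}{\ell}\,b^2z+\sum_i y_{\alpha,i}y_{\beta,i}y_{\gamma,i}\,a_i^2 z=0,
\]
so $\langle\alpha,\beta,\gamma\rangle=0$ in $H^5(N)$. Hence every triple Massey product on $N$ is trivial, Theorem~\ref{thm:formal-$7$-dim} gives formality of $N$, and Theorem~\ref{th:main} then rules out any $3$-Sasakian structure on $N$ since $b_2(N)=4\geq 2$.

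The principal obstacle I expect is the Sasaki--Einstein existence on $N$: it depends on the K\"ahler--Einstein input for the Fano threefold $M$ and on an appropriate Boothby--Wang / Kobayashi-type statement adapted to the prescribed Euler class, which is a geometric and PDE input rather than a computation inside the minimal model. By contrast, once the model $(H^\ast(M)\otimes\bigwedge(z),d)$ is in hand, the formality part is elementary, reducing thanks to the orthogonality of the basis of $H^2(M)$ to the single identity displayed above.
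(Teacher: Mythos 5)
Your proposal is correct and follows essentially the same route as the paper: the model $(H^*(M)\otimes\bigwedge(z),d)$ with $dz$ the Euler class, the reduction of formality to triple Massey products via Theorem~\ref{thm:formal-$7$-dim}, and their vanishing through the orthogonality relations $b\cdot a_i=0=a_i\cdot a_j$, your coordinate computation for arbitrary $\alpha,\beta,\gamma\in H^2$ being just a slightly more general version of the paper's check on the basis classes $a_i$. The one step you defer, the Sasaki--Einstein structure on $N$, is dealt with in the paper precisely by citation (the vanishing Futaki invariant and \cite{BaS} for the K\"ahler--Einstein metric on the symmetric toric Fano $M$, and \cite{Friedrich-Kath} for the induced structure on the circle bundle), so your treatment matches the paper's in substance.
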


\begin{proof}
First, note that we can assume that
$\ell \, b\,-\,\sum_{i=1}^4 \, a_{i}$ is the integral cohomology class defined by the K\"ahler form on the complex
manifold
$M\,=\,{\mathbb{CP}}{}^3\#\, \overline{\mathbb{CP}}{}^3\#\, \overline{\mathbb{CP}}{}^3\#\,
 \overline{\mathbb{CP}}{}^3\#\, \overline{\mathbb{CP}}{}^3$, for $\ell$ large enough. Therefore
there is a circle bundle $N\longrightarrow M$ with Euler class equal to $\ell \, b\,-\,\sum_{i=1}^4 \, a_{i}$.

Clearly $N$ is a $7$-dimensional simply connected, compact
manifold, with second Betti number $b_2=4$. Moreover, $N$ is
Sasaki-Einstein. Indeed, the manifold $M$, that is the blow up of the
complex projective space at four points, is a toric symmetric Fano
manifold with vanishing Futaki invariant  \cite{Fu} and the existence of a
K\"ahler Einstein metric follows from \cite{BaS} (see also
\cite{WZ}). An application of \cite[Example 1]{Friedrich-Kath}
gives the Sasaki-Einstein structure on $N$.

Now, by Theorem
\ref{thm:formal-$7$-dim}, $N$ is formal if and only if all the
triple Massey products on $N$ are trivial.
By Lemma \ref{lemm:massey-models}, we know that Massey products on a manifold can be computed by using any model for
the manifold.
Since $M$ is a simply connected compact manifold of dimension $6$,  $M$ is formal. Thus, a model of $M$ is
$(H^*(M), 0)$, where $H^*(M)$ is the
cohomology algebra of $M$. Hence, a model of $N$ is the differential algebra
$(\mathcal{A}, d)$, where $\mathcal{A}\,=\,H^*(M)\otimes \bigwedge(x)$, $|x|\,=\,1$
and $dx=\ell\, b\,-\,a_{1}\,-\, a_{2}\,-\, a_{3}\,-\, a_{4}$. Then,
 $$
 \begin{array}{l}
 H^1(\mathcal{A}, d)\,=\,H^3(\mathcal{A}, d)=H^4(\mathcal{A}, d)\,=\,H^6(\mathcal{A}, d)=0, \\
 H^2(\mathcal{A}, d)\,=\,\langle a_1,a_2, a_3, a_4\rangle, \\
 H^5(\mathcal{A}, d)\,=\,\langle (\ell a_1^2 + b^2) x , (\ell a_2^2 + b^2) x , (\ell a_3^2 + b^2) x, (\ell a_4^2 + b^2) x \rangle.
 \end{array}
 $$

We note that if a Massey product of three cohomology classes of $H^*(\mathcal{A}, d)$ is defined, then at most one
of these cohomology classes has degree $\geq 3$ since $\dim N=7$. Thus, by dimension reasons,
the unique possible non-trivial Massey products are Massey products of the cohomology classes $a_i$ of degree $2$.
Clearly, for any $1\leq i\leq 4$, $\langle a_i, a_i, a_i \rangle\,=\,0$. Now
we consider $i$ and $j$ such that $1\leq i, j\leq 4$ and $i\,\not=\,j$.
Then, $a_i\cdot  a_i\,= \,a_{i}^2\,= \,- {d}(x\cdot  a_i)$ and $a_i\cdot  a_j\,= \,0$. Thus,
the triple Massey product $\langle a_i, a_i, a_j \rangle$ is defined and
 $$
 \langle a_i, a_i, a_j \rangle\,=\,- x\cdot  a_i\cdot  a_j\,=\,0, 
 $$
since $a_i\cdot  a_j\,= \,0$. Finally, if $i, j, k \in\{1, 2, 3, 4\}$ are such that $i\not=j\not=k\not=i$, then
$a_i\cdot  a_j\,= \,0\,= \,a_j\cdot a_k$. Hence, the Massey product $\langle a_i, a_j, a_k\rangle$ is trivial again,
which completes the proof.
\end{proof}

Finally, we show examples of $7$-dimensional simply connected compact Sasakian manifolds,
with second Betti number $b_{2}\geq 2$, which are formal. For this,
we consider $M$ to be the blow up of the complex projective space
${\mathbb{CP}}{}^3$ at $k$ points, with $k\geq 2$, that is
 $$
   M\,=\,{\mathbb{CP}}{}^3\#\, \overline{\mathbb{CP}}{}^3\#\,  \overbrace{\cdots}^{k}
  \, \#\,\overline{\mathbb{CP}}{}^3,
$$
where $\overline{\mathbb{CP}}{}^3$ is ${\mathbb{CP}}{}^3$ with the opposite of the standard orientation.
(Note that the case $k=4$ was considered in Theorem \ref{th: formal-sasa-einst}.)
Now, the de Rham cohomology of $M$ is
\begin{itemize}
 \item $H^0(M)=\langle 1\rangle$,
 \item $H^1(M)=0$,
 \item $H^2(M)=\langle b, a_1, a_2, \cdots, a_k \rangle$,
 \item $H^3(M)=0$,
 \item $H^4(M)=\langle b^2, a_{1}^2, a_{2}^2, \cdots a_{k}^2\rangle$,
 \item $H^5(M)=0$,
 \item $H^6(M)=\langle b^3\rangle$,
\end{itemize}
where $b$ is the integral cohomology class defined by the  K\"ahler form $\omega$ on $\mathbb{CP}{}^3$.
Among these cohomology classes,
the following relations are satisfied
$$
b^3\,=\,-a_{i}^3, \quad \text{for} \quad 1\leq i \leq k, \qquad b\cdot a_{i} \,=\,0\,=\,a_{i}\cdot a_{j},
\quad \text{for} \quad 1\,\leq i, j \,\leq k \quad \text{and} \quad i\,\not=\,j.
$$

 \begin{proposition}
Let $P$ be the total space of the circle bundle  $S^1\,\longrightarrow\, P \,\longrightarrow\, M$, with Euler class
$\ell b\,-\,\sum_{i=1}^k \, a_{i}$, where $\ell >0$ is a large integer.
Then $P$ is a simply connected, compact  Sasakian manifold, with second Betti
number $b_2=k$, which is formal. Therefore, for $k\geq 2$, $P$ does not admit any $3$-Sasakian structure.
\end{proposition}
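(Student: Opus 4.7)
The plan is to follow almost verbatim the argument used for Theorem~\ref{th: formal-sasa-einst}, since for arbitrary $k\geq 2$ the cohomology of $M$ enjoys the same structural relations (namely $b\cdot a_i=0$ and $a_i\cdot a_j=0$ for $i\neq j$) as in the case $k=4$. First I would verify that the integral class $e=\ell b-\sum_{i=1}^k a_i$ is a K\"ahler class on $M$ for $\ell$ sufficiently large; this is standard for blow-ups of $\mathbb{CP}^3$ at finitely many points, whose K\"ahler cone contains all classes of the form $\ell b-\sum c_i a_i$ with $0<c_i\ll\ell$. The Boothby--Wang construction then endows the principal circle bundle $S^1\to P\to M$ with Euler class $e$ with a regular Sasakian structure.

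Next I would read off the topology of $P$. Compactness is immediate. Since $e$ is primitive in $H^2(M;\mathbb{Z})$ (one of its coefficients equals $-1$) and $\pi_1(M)=0$, the long exact homotopy sequence of the fibration forces $\pi_1(P)=0$. The Gysin sequence gives $H^2(P)\cong H^2(M)/\langle e\rangle$, which has rank $k$, so $b_2(P)=k$.

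For formality I would invoke Theorem~\ref{thm:formal-$7$-dim}, reducing the problem to the vanishing of all triple Massey products on $P$. By Lemma~\ref{lemm:massey-models}, Massey products can be computed in any model of $P$; since $M$ is formal (being a simply connected compact $6$-manifold), a model of $P$ is the differential algebra $(\mathcal{A},d)=(H^*(M)\otimes\bigwedge(x),d)$ with $|x|=1$ and $dx=e$. A direct computation shows that $H^*(\mathcal{A},d)$ is concentrated in degrees $0,2,5,7$, with $H^2(\mathcal{A},d)=\langle a_1,\dots,a_k\rangle$. By dimension, every defined Massey triple $\langle u,v,w\rangle$ on $P$ must consist of three degree-$2$ classes, so it suffices to examine $\langle[a_i],[a_j],[a_r]\rangle$ for $1\leq i,j,r\leq k$. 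Using the relations $a_ia_j=0$ for $i\neq j$ in $H^*(M)$ and $a_i^2=-d(a_i x)$ in $\mathcal{A}$, each Massey representative either vanishes outright or reduces to a scalar multiple of a class of the form $a_pa_q\,x$ with $p\neq q$; all such classes are zero because $a_pa_q=0$ in $H^*(M)$. Hence $P$ is formal.

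The only step that is not completely routine is the K\"ahlerness of $e$ for $\ell$ large, which is needed both to produce the Sasakian structure and to guarantee the primitivity of the Euler class; beyond this citation, everything is parallel to the proof of Theorem~\ref{th: formal-sasa-einst}. Given formality, the nonexistence of a $3$-Sasakian structure on $P$ for $k\geq 2$ is an immediate consequence of Theorem~\ref{th:main}: a simply connected compact $7$-dimensional $3$-Sasakian manifold with $b_2\geq 2$ would have to be non-formal, contradicting the formality just established.
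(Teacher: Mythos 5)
Your proposal is correct and follows essentially the same route as the paper: Boothby--Wang on the K\"ahler class $\ell b-\sum a_i$ for $\ell$ large, then the Mu\~noz--Tralle criterion (Theorem \ref{thm:formal-$7$-dim}) together with the model $(H^*(M)\otimes\bigwedge(x),\,dx=\ell b-\sum a_i)$ and the relations $b\cdot a_i=0=a_i\cdot a_j$ to kill all triple Massey products, and finally Theorem \ref{th:main} to exclude $3$-Sasakian structures. The only difference is that you spell out the simple connectedness of $P$ (via primitivity of the Euler class and the homotopy sequence) and the Gysin computation of $b_2$, which the paper leaves implicit.
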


\begin{proof}
Since,  for $\ell$ large enough, $\ell \, b\,-\,\sum_{i=1}^k \, a_{i}$ is the integral cohomology class defined by K\"ahler form on the complex
manifold
$M\,=\,{\mathbb{CP}}{}^3\#\, \overline{\mathbb{CP}}{}^3\#\, \cdots \,\#\,
\overline{\mathbb{CP}}{}^3$, we can consider
the principal circle bundle $S^1\,\longrightarrow\, P \longrightarrow M$ with Euler class equal to $\ell \, b\,-\,\sum_{i=1}^k \, a_{i}$.
Then $P$ is a $7$-dimensional simply connected, compact
Sasakian manifold, with second Betti number $b_2(P)=k$.

By Theorem
\ref{thm:formal-$7$-dim}, $P$ is formal if and only if all the
triple Massey products on $P$ are trivial.
Moreover, by Lemma \ref{lemm:massey-models}, to calculate Massey products on a manifold we can use any model for
the manifold.
Since $M$ is a simply connected compact manifold of dimension $6$,  $M$ is formal. Thus, a model of $M$ is
$(H^*(M), 0)$, where $H^*(M)$ is the
cohomology algebra of $M$. Hence, a model of $N$ is the differential algebra
$(\mathcal{A}, d)$, where $\mathcal{A}\,=\,H^*(M)\otimes \bigwedge(x)$, $|x|\,=\,1$
and $dx=\ell\, b\,-\,\sum_{i=1}^k \, a_{i}$. Then,
 $$
 \begin{array}{l}
 H^1(\mathcal{A}, d)\,=\,H^3(\mathcal{A}, d)=H^4(\mathcal{A}, d)\,=\,H^6(\mathcal{A}, d)=0, \\
 H^2(\mathcal{A}, d)\,=\,\langle a_1,a_2, \cdots, a_{k-1}, a_k\rangle, \\
 H^5(\mathcal{A}, d)\,=\,\langle (\ell a_i^2 + b^2) x , \, i= 1, 2, \cdots, k\rangle.
 \end{array}
 $$

Now a similar proof to that given in Theorem \ref{th: formal-sasa-einst} allows one to show that all triple Massey products on $P$ are zero.
\end{proof}

\begin{remark}
Note that the K\"ahler manifold $M$ defined as the blow up of the
complex projective space ${\mathbb{CP}}{}^3$ at $k$ points, with
$k\geq 2$, is K\"ahler Einstein if $k=4$. For $k \leq 3$, it
happens that the authomorfism group of $M$ is not reductive, and
the very well known Matsushima criterium \cite{Matsushima} implies
that $M$ does not admit K\"ahler Einstein metrics. If $k > 4$, it
is not clear (at least to the authors) whether the
manifold $M$, that is the blow up of ${\mathbb{CP}}{}^3$ at more
than 4 points, admits a K\"ahler Einstein metric. So we can only
claim that, for $k > 4$, the total space $P$ of the circle bundle
over $M$ is Sasakian and formal, hence $N$ does not admit any
$3$-Sasakian structure.
\end{remark}

\section*{Acknowledgements} We would like to thank V. Apostolov for explaining to us the criterium for the existence
of a K\"ahler Einstein metric on  Fano manifolds.  The first
author was partially supported through Project MICINN (Spain)
MTM2011-28326-C02-02 and MTM2014-54804-P. The
second author was partially supported by Contract DFNI
I02/4/12.12.2014 and Contract 148/17.04.2015 with the Sofia
University ``St.Kl.Ohridski''. The third author was partially
supported by Project MICINN (Spain) MTM2010-17389.

\end{document}